\begin{document}

\title{Bi-level regularization via iterative mesh refinement for aeroacoustics}
\author{Christian Aarset and Tram Thi Ngoc Nguyen} 
\institute{C. Aarset \at University of G\"ottingen - Institute for Numerical and Applied Mathematics, Lotzestr. 16-18, D-37083 G\"ottingen, Germany  
\and T. T. N. Nguyen \at MPI Solar Systems Research - Fellow Group Inverse Problems, Justus-von-Liebig-Weg 3, 37077 G\"ottingen, Germany \at \email{nguyen@mps.mpg.de}}  

\maketitle

\abstract{In this work, we illustrate the connection between adaptive mesh refinement for finite element discretized PDEs and the recently developed \emph{bi-level regularization algorithm}, due to Nguyen (Inverse Problems \textbf{40}(4) 2024). By adaptive mesh refinement according to data noise, regularization effect and convergence are immediate consequences. We moreover demonstrate its numerical advantages to the classical Landweber algorithm in terms of time and reconstruction quality for the example of the Helmholtz equation in an aeroacoustic setting.}


\keywords{inverse source problem, bi-level approach, Landweber method, Helmholtz equation, finite elements, mesh refinement.}
\\
{{\bf MSC2020:} 65M32; 65J22; 35R30.}


\def\R{\mathbb{R}}
\def\C{\mathbb{C}}
\def\N{\mathbb{N}}
\def\F{\mathbb{F}}
\def\Om{\Omega}
\def\Ls{L^2(\Omega_0)}
\def\Ld{L^2(\Omega_1,\C)}
\renewcommand{\Re}{\mathop{Re}\nolimits}
\renewcommand{\d}{\,\mathrm{d}}

\newcommand{\blue}[1]{\textcolor{blue}{#1}}
\section{Inverse aeroacoustic source with Helmholtz equation} We investigate an inverse source problem in aeroacoustics, where an unknown source $\phi$  is located in the sub-region $\Omega_0$ of an enclosed room, denoted by $\Omega$, and needs to be determined from acoustic oscillations measured outside of $\Omega_0$. The measurement region, denoted $\Omega_1\subset\Omega$, includes a finite number of obstacles, e.g.~sound-hard walls, reflecting the sound wave propagating to the aeroacoustic sensors \cite{aeroacoustic}. $\Omega$ is treated as a homogeneous medium, enabling the Helmholtz equation to adequately model the oscillation at a single frequency and constant sound speed.

Mathematically, let the room $\Omega$ be a bounded Lipschitz domain in $\R^d$, $d=1,2,3$; for illustration in this article, we choose $d=2$. The unknown real source
\begin{align}
\phi\in \Ls, \quad \Omega_0\subset\text{int }\Omega
\end{align}
generates a complex wave field $u$, propagating throughout the room. The measurement region $\Om_1\subset\Om$ is disjoint from the source domain $\Om_0$ in the sense that $\overline{\Om_0}\cap\overline{\Om_1}=\emptyset$; we here assume that $\Om_1$ additionally contains three disjoint rectangular sound-hard scatterers $S_i$, $i\in\{1,2,3\}$; see Section \ref{sec:num}. We then define the measurement operator with $\Omega_1:=\Omega\setminus\big(\Omega_0\cup\bigcup_{i=1}^3S_i\big)$ as
\begin{equation}\label{data}
\begin{split}
M: &\, H^1(\Omega\setminus\bigcup_{i=1}^3S_i,\C)\to  L^2(\Omega_1,\C), \quad u\mapsto u|_{\Omega_1}.
\end{split}
\end{equation}
 
The travel of acoustic waves through the room is for each constant wave number $k\in\R$ governed by the homogeneous Helmholtz equation
\begin{alignat}{2}\label{eq}
&\Delta u + k^2 u  = \phi &&\text{in $\Omega\setminus\bigcup_{i=1}^3S_i$},\\
\frac{\partial u}{\partial n} & = ik u \quad \text{on \,$\partial \Omega$,} \qquad
&&\frac{\partial u}{\partial n}  = 0 \quad \text{on \,$\bigcup_{i=1}^3\partial S_i$,} \nonumber
\end{alignat}
where $\phi$ is implicitly extended by zero outside of $\Omega_0$. The sound-hard scatterers are modeled by the Neumann condition on the interior Lipschitz boundaries $\partial S_i$, and on the exterior boundary $\partial\Omega$ we use an approximate Sommerfeld radiation condition; $n$ are the outward normal vectors.

The linear source-to-observable map $F$ is defined by
\begin{align}
F: \Ls\to\Ld  \qquad \phi \mapsto u|_{\Omega_1},
\end{align} 
hence, we formulate the inverse aeroacoustic source problem given noisy data $y^\delta$ as
\begin{align*}
\text{Find $\phi$ such that} \quad F(\phi)=y^\delta.  
\end{align*}

We remark that while the PDE \eqref{eq} is well-posed
\cite{ColKre13}, the observation operator $M$ renders $F$ non-injective and compact, making the inverse problem ill-posed. Thus, regularization is required to stably reconstruct the unknown acoustic source. 

We shall use the bi-level regularization scheme \cite[Algorithm 1.2]{nguyen24} for the reconstruction process. In the cited work, the author proposes a novel bi-level inversion framework. The upper level-iteration, which iteratively approximates the unknown parameter -- for this article, the source term -- embeds a lower level-iteration, which solves nonlinear PDEs inexactly. A key contribution of \cite{nguyen24} is the derivation of stopping rules for upper- and lower-level that can ensure convergence. In addition, embedding any PDE solver (FD, FEM, multigrid etc.)~into the lower-level, the error analysis in \cite{nguyen24} can inform an adaptive discretization scheme via multi-grid size, \nobreakdash-mesh size or \nobreakdash-resolution \cite{Trottenberg} that are computationally effective by not fixing a single fine scale in the inversion process.

As the model \eqref{eq} is rather classical, we shall not use the lower-level to approximate $u$. Instead, Section \ref{sec:algorithm} details how to implement a FEM solver with an adaptive \underline{mesh refinement} strategy suggested in \cite[Theorem 2]{nguyen24} to speed up the reconstruction, while, crucially, maintaining the regularizing property. This, along with the numerical verification provided in Section \ref{sec:num}, forms the contribution of this article.

\section{Inversion algorithm with mesh iterative refinement}\label{sec:algorithm}
In \cite[Algorithm 1.2]{nguyen24}, the upper-level iteratively updates the source term, and requires the adjoint of the source-to-observation $F$ to be derived.
\begin{lemma}[Adjoint]
The Hilbert space adjoint $F^*:\Ld \mapsto \Ls$ is
\begin{equation}\label{adjoint}
\begin{split}
F^*v:=\Re(z)|_{\Omega_0} \qquad&\text{ with}\\
\Delta z + k^2 z  = v \qquad&\text{in $\Omega\setminus\bigcup_{i=1}^3S_i$}, \\
\frac{\partial z}{\partial n} = -ik z \quad\text{on $\partial \Omega$,}\qquad&
\frac{\partial z}{\partial n}  = 0 \quad\text{on $\bigcup_{i=1}^3\partial S_i$,} 
\end{split}
\end{equation}
where $v$ is implicitly extended by zero outside of $\Omega_1$.
\end{lemma}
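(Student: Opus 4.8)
The plan is to verify the defining identity of the adjoint directly from the variational forms of the two boundary value problems, rather than to invoke Green's second identity, which would demand an $H^2$-regularity we do not have at hand. The first point to settle is the choice of inner products. Since the source space $\Ls$ is \emph{real} while the data space $\Ld$ is complex, $F$ is only $\R$-linear, so I regard $\Ld$ as a real Hilbert space equipped with $\langle u,v\rangle_{\R}:=\Re\int_{\Omega_1}u\bar v\,\d x$. Accordingly, $F^*$ is characterized by
\begin{equation*}
\Re\int_{\Omega_1}(F\phi)\,\bar v\,\d x=\int_{\Omega_0}\phi\,(F^*v)\,\d x\qquad\text{for all }\phi\in\Ls,\ v\in\Ld,
\end{equation*}
and the task reduces to showing that the left-hand side equals $\int_{\Omega_0}\phi\,\Re(z)\,\d x$, with $z$ the solution of \eqref{adjoint}.

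Writing $D:=\Om\setminus\bigcup_{i=1}^3 S_i$, I would next record the two weak formulations on $H^1(D,\C)$. Testing the forward equation \eqref{eq} against $z$ and integrating by parts gives
\begin{equation*}
-\int_D\nabla u\cdot\nabla\bar z\,\d x+ik\int_{\partial\Om}u\bar z\,\d S+k^2\int_D u\bar z\,\d x=\int_{\Omega_0}\phi\,\bar z\,\d x,
\end{equation*}
where the Neumann condition on $\partial S_i$ kills those boundary contributions and the Robin condition $\partial u/\partial n=iku$ produces the integral over $\partial\Om$. Symmetrically, testing the adjoint equation \eqref{adjoint} against $u$, integrating by parts, and taking complex conjugates of the whole identity (using $k\in\R$) yields
\begin{equation*}
-\int_D\nabla u\cdot\nabla\bar z\,\d x+ik\int_{\partial\Om}u\bar z\,\d S+k^2\int_D u\bar z\,\d x=\int_{\Omega_1}u\,\bar v\,\d x.
\end{equation*}
The crucial observation is that the two left-hand sides are \emph{identical}: this is exactly the purpose of the sign flip $-ik$ in the adjoint Robin condition, which after conjugation reproduces the $+ik$ boundary term of the forward form. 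Equating the right-hand sides gives $\int_{\Omega_0}\phi\,\bar z\,\d x=\int_{\Omega_1}u\,\bar v\,\d x$.

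It then remains only to take real parts: since $\phi$ is real-valued, $\Re(\phi\bar z)=\phi\,\Re(\bar z)=\phi\,\Re(z)$, whence
\begin{equation*}
\int_{\Omega_0}\phi\,\Re(z)\,\d x=\Re\int_{\Omega_1}u\,\bar v\,\d x=\langle F\phi,v\rangle_{\R},
\end{equation*}
which is precisely the adjoint identity with $F^*v=\Re(z)|_{\Omega_0}$. I expect the main difficulty to be bookkeeping rather than analysis: fixing the real/complex inner-product conventions correctly, and recognizing that the conjugated adjoint Robin condition is what forces the two sesquilinear forms to coincide. One should also check that the adjoint problem \eqref{adjoint} is well-posed — it is the conjugate-type counterpart of \eqref{eq} and hence covered by the same theory \cite{ColKre13} — so that $z$, and therefore $F^*v$, is well defined, with uniqueness following from the identity holding for all admissible $\phi$.
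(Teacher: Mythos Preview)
Your argument is correct and is in spirit the same as the paper's: both verify the adjoint identity by pairing the forward solution $u$ with the adjoint solution $z$, integrating by parts, and observing that the sign flip $-ik$ in the adjoint Robin condition makes the boundary contributions on $\partial\Omega$ cancel (while the Neumann data on $\partial S_i$ vanish trivially), leaving $\int_{\Omega_0}\phi\,\bar z=\int_{\Omega_1}u\,\bar v$ and hence the claim after taking real parts.

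The difference is one of packaging. The paper works at the strong-form level: it replaces $v$ by $\Delta z+k^2 z$, effectively applies Green's second identity to shift both derivatives onto $u$, and then uses $\Delta u+k^2u=\phi$; the boundary remainder $iku-\partial_n u$ on $\partial\Omega$ is recorded and seen to vanish. You instead write down the two \emph{variational} identities (one integration by parts each), conjugate the adjoint one, and match the sesquilinear forms directly. Your route has the advantage you point out --- it only needs $u,z\in H^1(D,\C)$ and never invokes $H^2$-regularity or the second Green identity --- and it makes the role of the conjugate Robin condition more transparent. The paper's computation is shorter to write but tacitly assumes enough regularity to move $\Delta$ across the pairing. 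Both arrive at the same endpoint.
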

\begin{proof}
For any $\phi\in \Ls$, $v\in \Ld$, we write the inner product
\begin{align*}
\left(F\phi,v\right)_{\Ld}&:=\Re\int_{\Omega_1}u|_{\Omega_1}\overline{v}\,dx=Re\int_{\Omega_0\cup\Omega_1}u\, \overline{v}\,dx\\
&=Re\left(\Delta u+k^2u,z\right)_{L^2(\Omega_1\cup\Omega_0,\C)} + Re\left(iku-\partial_n u,z\right)_{L^2(\partial \Omega,\C)}\\
&=\left(\phi,\Re(z)\right)_{L^2(\Omega_1\cup\Omega_0)} =\left(\phi,\Re(z)|_{\Omega_0}\right)_{\Ls}\quad=:\left(\phi,F^* v\right)_{\Ls}
\end{align*}
where $\overline{(\cdot)}$ denotes the complex conjugate, $u$ is any solution of \eqref{eq} on $\Omega\setminus\bigcup_{i=1}^3S_i$, and recalling that $v$ is extended by zero outside of $\Omega_1$. The proof is complete.
\end{proof}


\begin{remark}[Multi grid size]We here remark how \cite{nguyen24} applies to multi-grid and mesh refinement. Given data noise $\delta$, \cite[Proposition 5]{nguyen24} shows that for each upper-iteration $j$, the lower-iteration should be run with precision $\epsilon_j=\delta/q^j$, $q\geq 1$, terminating the upper-level at the stopping index $j^*(\delta,q)$ according to \cite[Theorem 2]{nguyen24}. In this study, the lower-level discretization/approximation error $\epsilon_j$ translates to the FEM grid size $h_j$, where for any FEM its grid size $h$ is approximately given as $h\approx\max\left(2\int_E1\d x\right)^{1/2}$, the max being taken over elements $E$ of the FEM, approximately yielding the max height of any triangle in the mesh discretization. By \cite{BrennerScott}, the approximation error of the FEM is proportional to $h$, that is, $\epsilon = Ch$ for some $C$. By this analysis, the grid size is initially kept constant $\epsilon_0:=Ch_0>0$, and must be iteratively refined as soon as $j\geq (\ln\delta - \ln Ch_0)/\ln q$; we adaptively refine the mesh in all elements corresponding to large $h$. Expressed as an algorithm, we have \label{rem:multi}
\end{remark}

\vspace{-0.75cm}

\begin{algorithm}
\label{alg:cap}
\begin{algorithmic}
\Ensure $\phi^0:=0$, \text{step size }$\mu$, \text{threshold }$\tau$
\While{$\|y^\delta-F\phi^j\|_2 \leq \tau\delta$}
\If{$j \geq (\ln\delta - \ln Ch_0)/\ln q$}
    \State refine mesh as in Remark \ref{rem:multi}
\EndIf
\State $v\gets F_{h(\delta,j)}\phi^j - y^\delta$
\State $z \gets$ solves \eqref{adjoint} \text{ with } $F^*_{h(\delta,j)}$, $v$
\State $\phi^{j+1} \gets \phi^j - \mu z$
\EndWhile
\end{algorithmic}
\end{algorithm}

\section{Numerical experiments}\label{sec:num}

In this Section, we numerically verify our analysis by reconstruction of the source 
\vspace{-0.05cm}
\[
\phi(x_1,x_2):=
\sqrt{\min\{
\tfrac{1}{4}-x_1^2-x_2^2, 0
\}}
\cos\left(
2\pi\sqrt{x_1^2+x_2^2}
\right).
\]
\vspace{-0.35cm}

For the approximate PDE solver, we employ the NGSolve \cite{Schoberl} finite element method (FEM) package to discretize our domain $\Omega:=[-1,1]^2\subset\R^2$ and subsequently also the forward operator $F$, where $\Omega_0:=[-1/2,1/2]^2\subset\Omega$ and $\Omega_1:=[-1,1]^2\setminus [-11/20,11/20]^2\subset\R^2$ (see Figures; sound-hard scatterers $S_i$, $i=1,2,3$ are as modeled). We approximate the true state $u:=F\phi$ by applying a finely discretized FEM solver ($h^\dagger\approx 0.046$), and conduct two numerical experiments, comparing the recovery of the source term $\phi$ from noisy observations $y^\delta$ of the true state $u$, where $y^\delta-u$ consists of additive Gaussian white noise of size $1\%$ resp.~$10\%$ relative to $u$. In both experiments, two methods of recovering $\phi$ are compared.

The first method is bi-level Landweber iteration with mesh refinement as proposed above, beginning with a coarse mesh ($h_0\approx 0.531$), $q:=2^{1/60}$ and $C:=1.4/\delta$. The second method is \enquote{direct} Landweber regularization, carried out 
on a single, distinct FEM-discretized mesh that is finer than any used by the bi-level and coarser than the FEM used to generate the approximate state $u$ ($h_{\text{direct}}\approx 0.064$). In all cases, $\mu$ was estimated as $\mu\approx 0.075$, and all iteration was stopped at the first (upper-)index $j$ satisfying the discrepancy principle $\|y^\delta-F\phi^j\|_2 \leq \tau\delta$ with $\tau:=1.3$.

\begin{figure}[h!]
\centering
\includegraphics[trim={19.8cm 5cm 19.8cm 7.1cm},clip,width=0.245\textwidth,keepaspectratio]{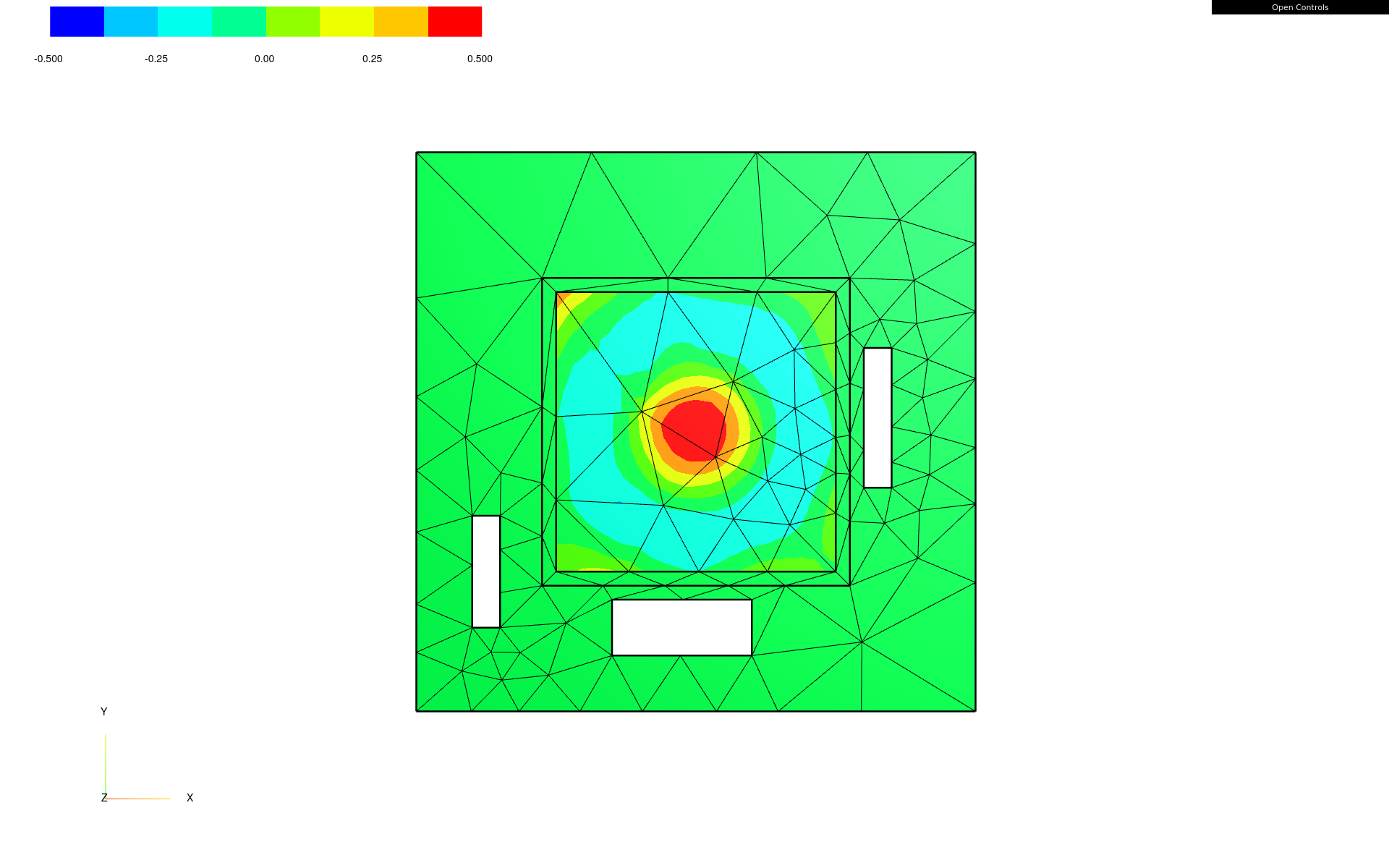}
\includegraphics[trim={19.8cm 5cm 19.8cm 7.1cm},clip,width=0.245\textwidth,keepaspectratio]{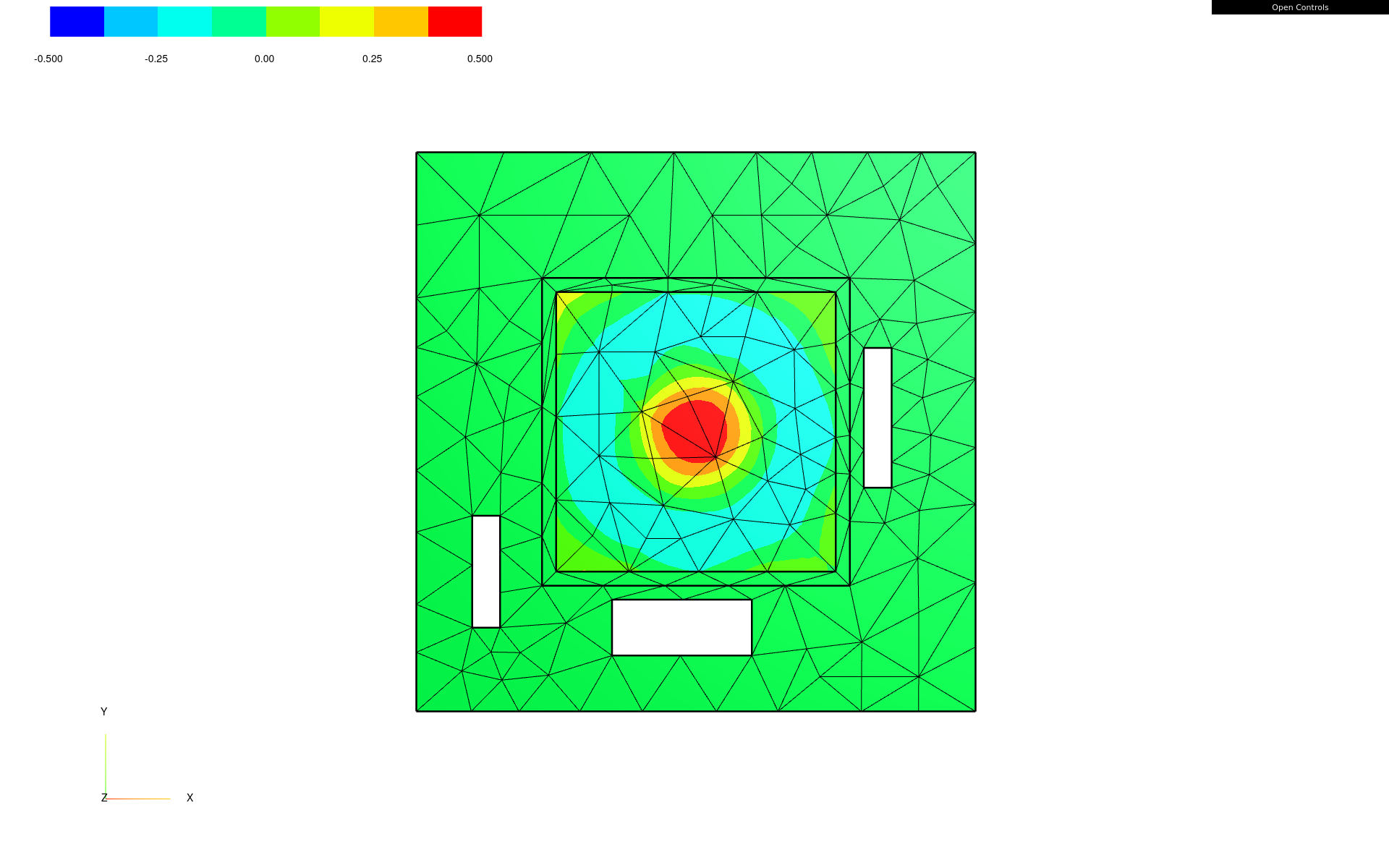} 
\includegraphics[trim={19.8cm 5cm 19.8cm 7.1cm},clip,width=0.245\textwidth,keepaspectratio]{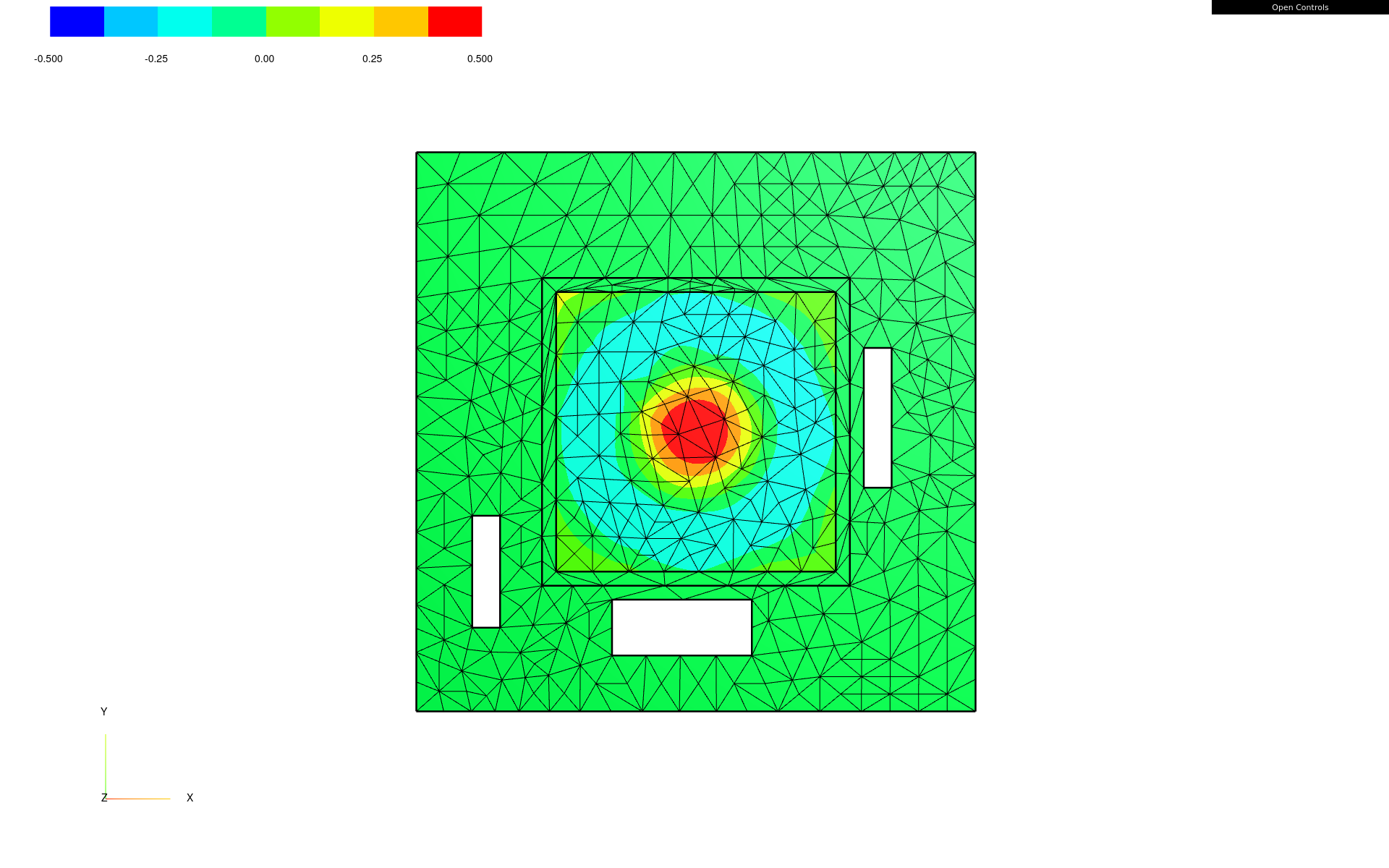}
\includegraphics[trim={19.8cm 5cm 19.8cm 7.1cm},clip,width=0.245\textwidth,keepaspectratio]{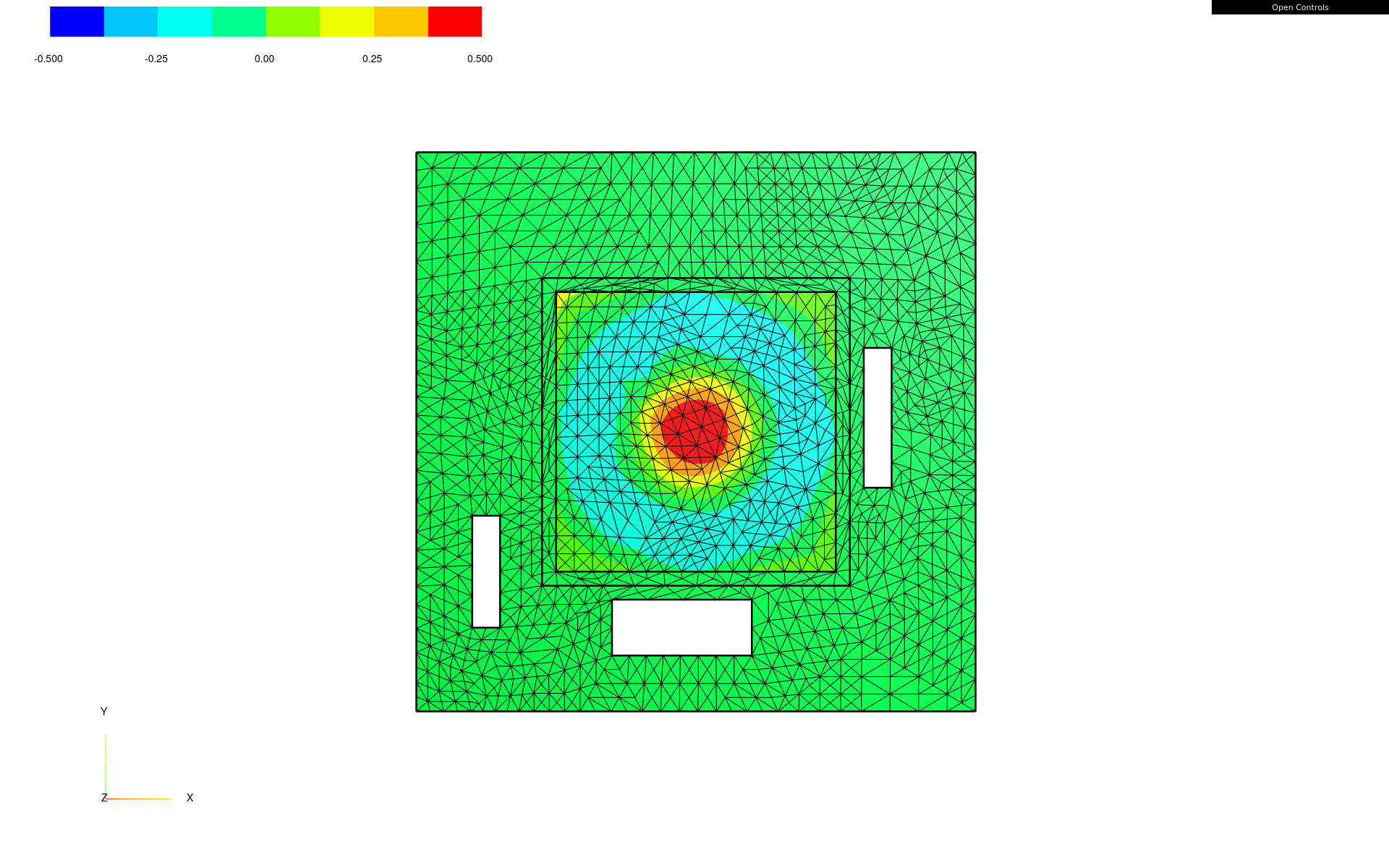}
\caption{Iterative reconstruction on each mesh in the bi-level algorithm, prior to refinement. Noise level $1\%$. $h\approx 0.531$, $0.265$, $0.139$, $0.08$.}
\end{figure}

\begin{figure}[h!]
\begin{minipage}[t]{0.49\linewidth}\vspace{0pt}
\centering
\includegraphics[trim={19.8cm 5cm 19.8cm 7.1cm},clip,width=0.49\textwidth,keepaspectratio]{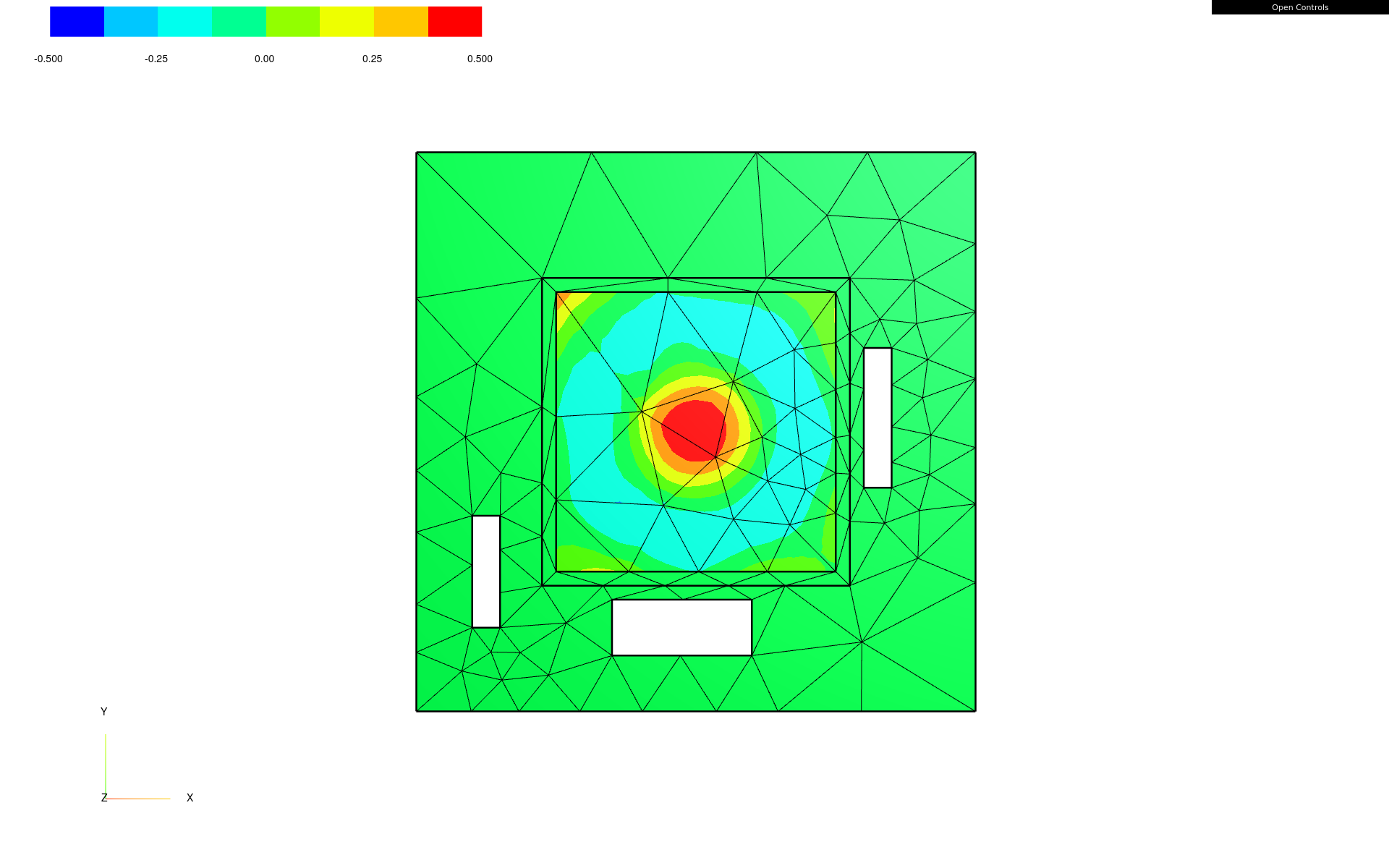}
\includegraphics[trim={19.8cm 5cm 19.8cm 7.1cm},clip,width=0.49\textwidth,keepaspectratio]{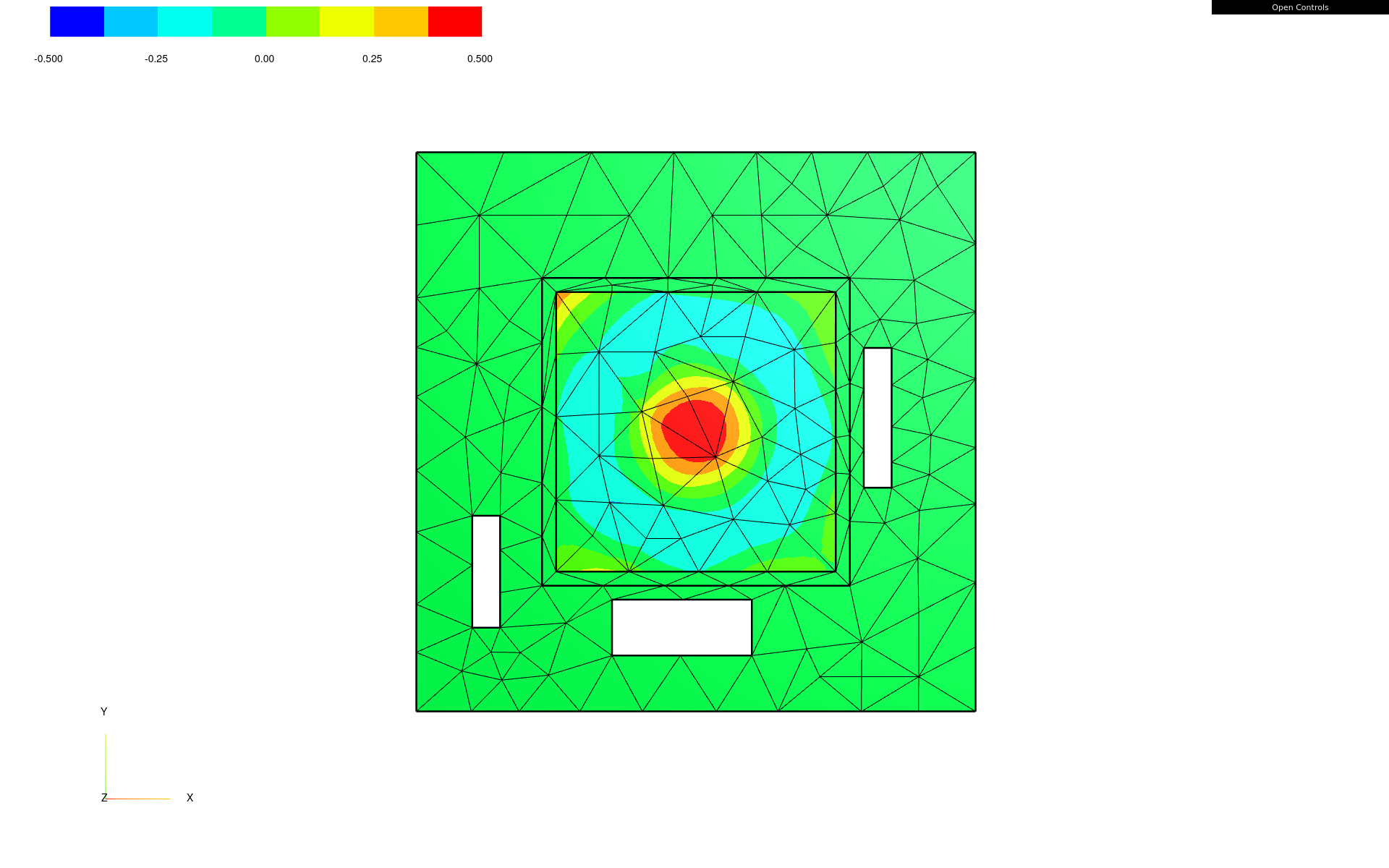}
\caption{Iterative reconstruction on each mesh in the bi-level algorithm, prior to refinement. Noise level $10\%$. $h\approx 0.531$, $0.265$.}
\end{minipage}
\hfill
\begin{minipage}[t]{0.49\linewidth}\vspace{0pt}
\centering
\includegraphics[trim={19.8cm 5cm 19.8cm 7.1cm},clip,width=0.49\textwidth,keepaspectratio]{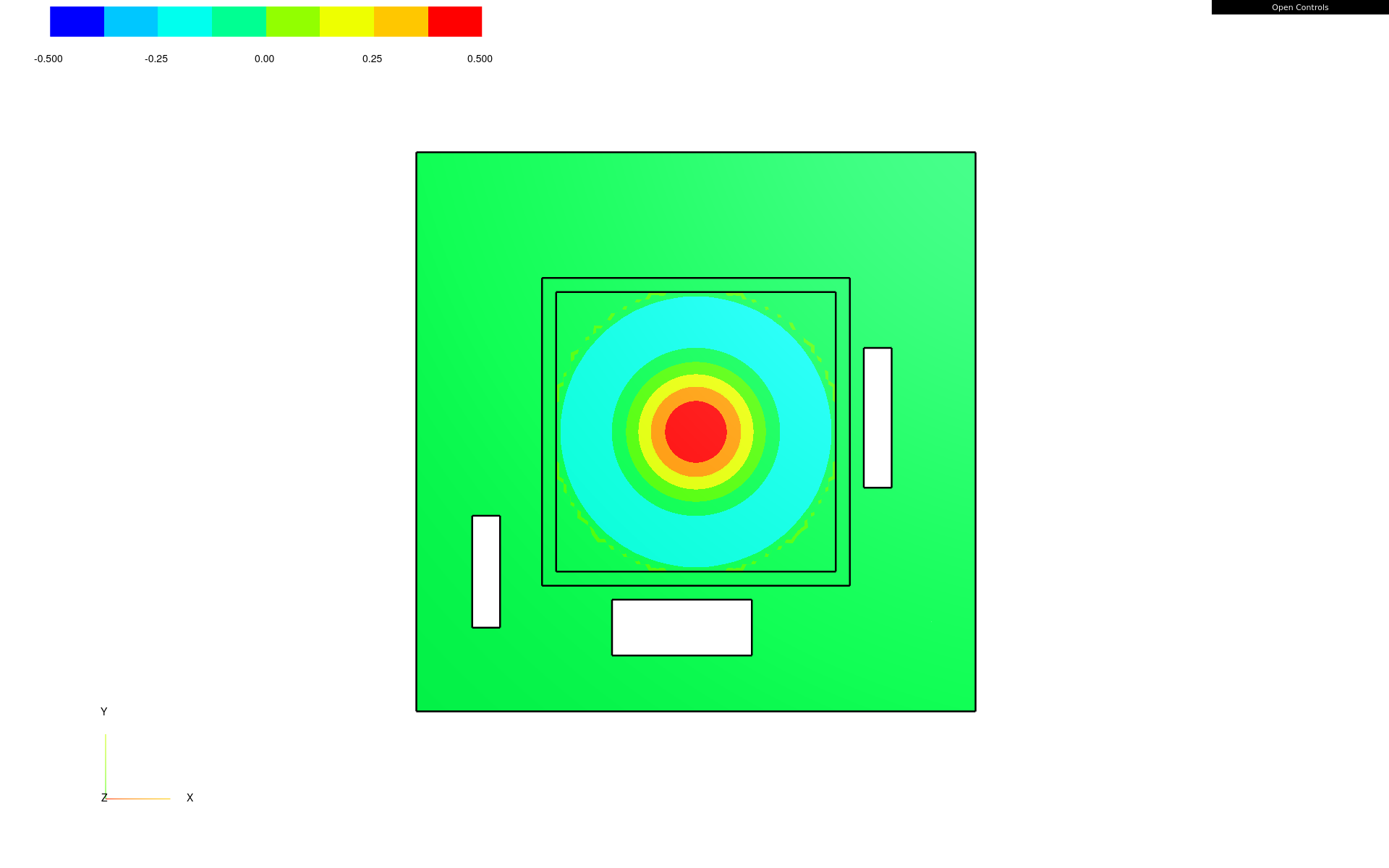}
\caption{True source $\phi$. Mesh not depicted.}
\end{minipage}
\vspace{-0.2cm}
\end{figure}

We make some remarks regarding the reconstruction. Figure \ref{fig:rec_errs} compares final reconstruction errors, while Figure \ref{fig:errs} displays reconstruction errors and residuals for the two methods, plotted by total time taken, accounting for mesh refinement (bi-level only), the Landweber step and computation of the residual.

For both $1\%$ and $10\%$ relative noise, the bi-level algorithm reached the discrepancy principle much earlier than the direct Landweber algorithm. In both cases, the final residual was smaller for the bi-level algorithm than for the direct Landweber algorithm. For $1\%$ relative noise, the final reconstruction for the direct Landweber method was slightly better than that of the bi-level algorithm, but the latter reached a higher precision much earlier despite undergoing three mesh refinements, and was only overtaken after a longer period of time. For $10\%$ relative error, the bi-level algorithm outperformed the direct Landweber method in all respects despite needing only one mesh refinement. These observations seem to justify the intuition that higher noise levels can efficiently be handled with coarser grids, an effect that the bi-level method makes explicit.


\begin{figure}
\vspace{0.1cm}
\centering
$\vcenter{\hbox{\vspace{6pt}\includegraphics[width=0.0333\textwidth,keepaspectratio]{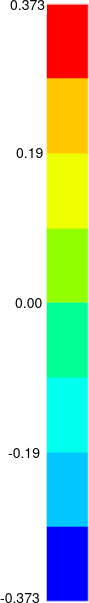}}}$
$\vcenter{\hbox{\includegraphics[trim={19.8cm 5cm 19.8cm 7.1cm},clip,width=0.23\textwidth,keepaspectratio]{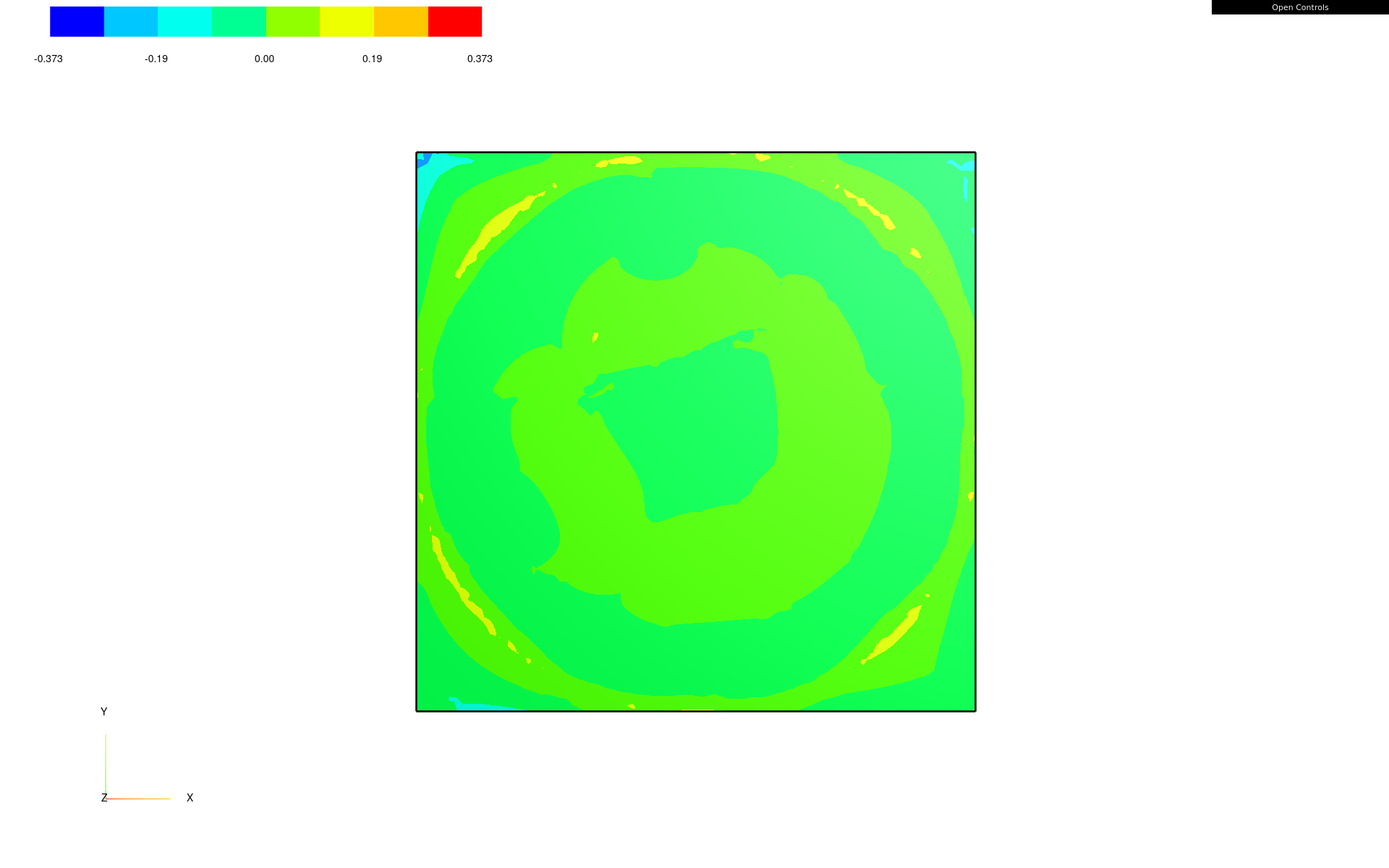}}}$
$\vcenter{\hbox{\includegraphics[trim={19.8cm 5cm 19.8cm 7.1cm},clip,width=0.23\textwidth,keepaspectratio]{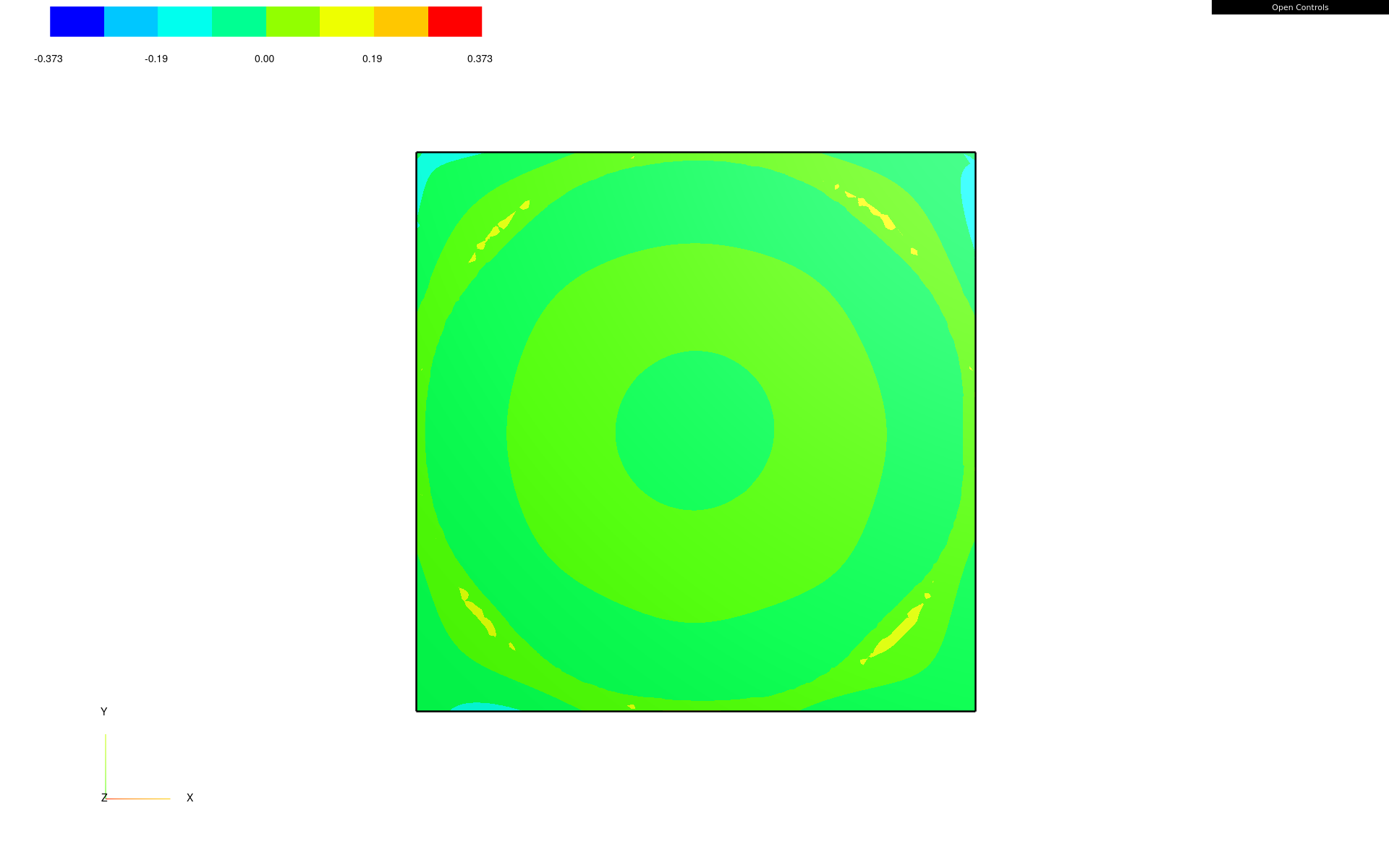}}}$
$\vcenter{\hbox{\includegraphics[trim={19.8cm 5cm 19.8cm 7.1cm},clip,width=0.23\textwidth,keepaspectratio]{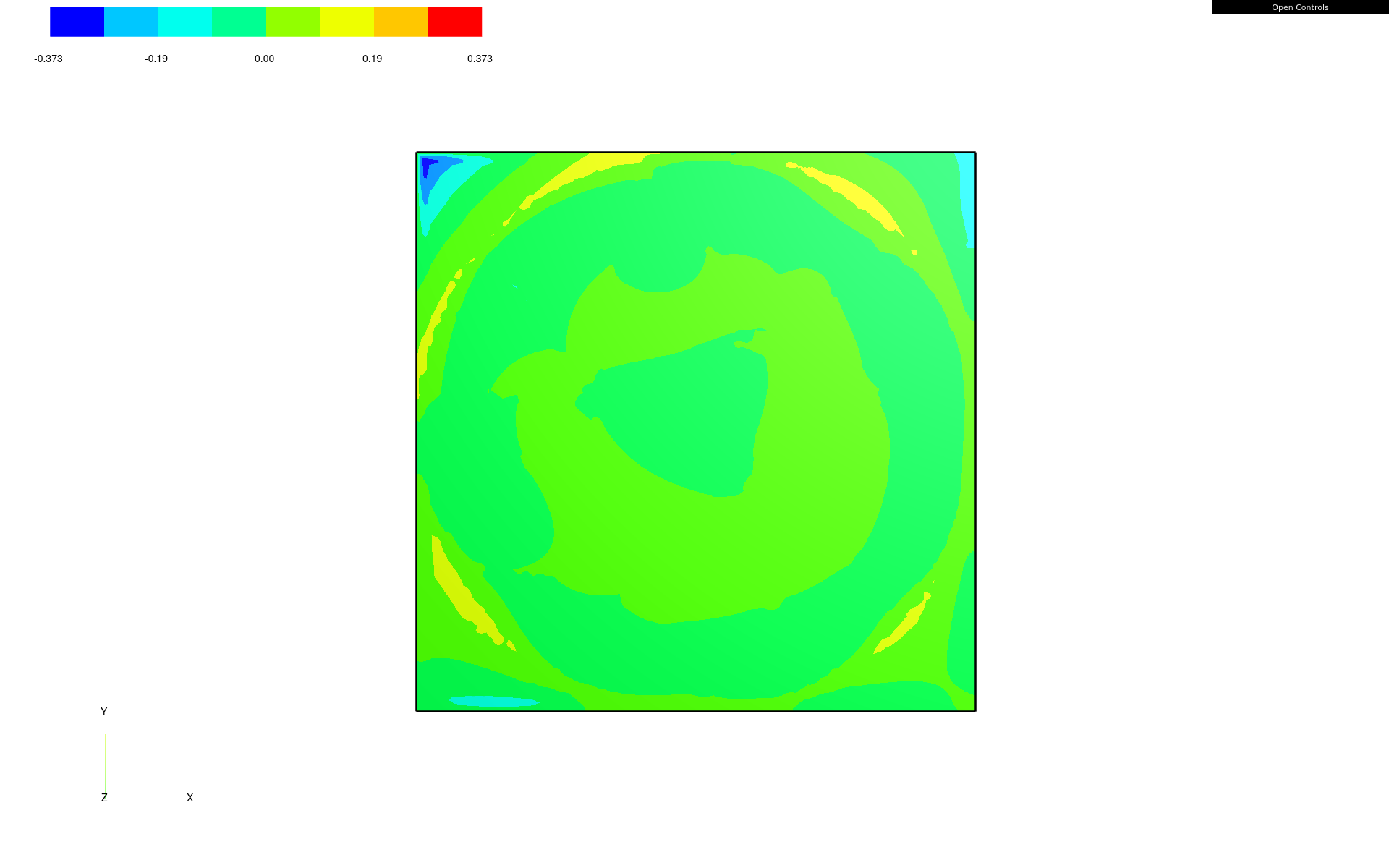}}}$
$\vcenter{\hbox{\includegraphics[trim={19.8cm 5cm 19.8cm 7.1cm},clip,width=0.23\textwidth,keepaspectratio]{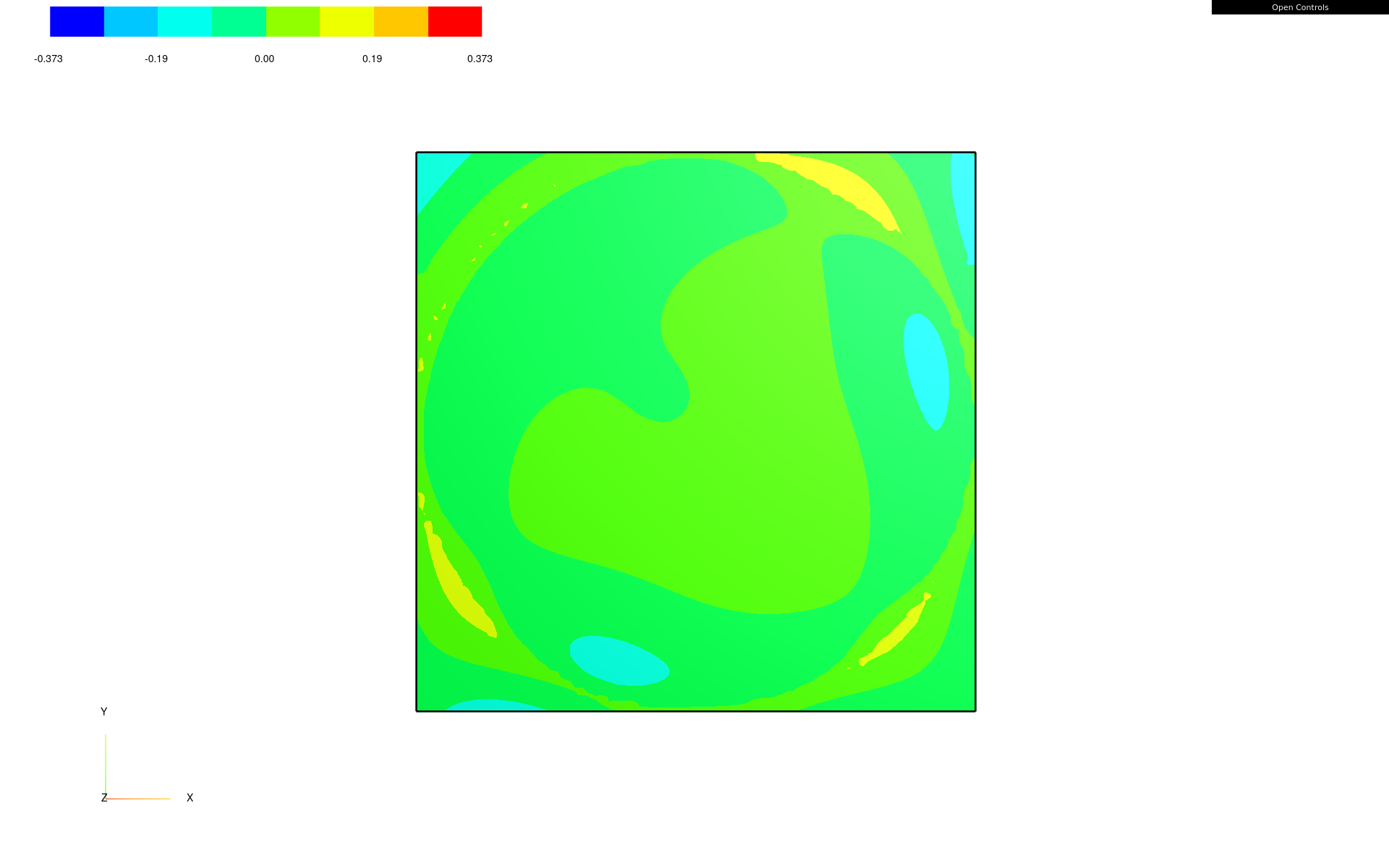}}}$
\vspace{0.1cm}
\caption{Error plots of reconstructions. Left to right: Bi-level with $1\%$ noise, direct Landweber with $1\%$ noise, bi-level with $10\%$ noise, direct Landweber with $10\%$ noise. Mesh not depicted.}
\label{fig:rec_errs}
\vspace{-0.35cm}
\end{figure}


\begin{figure}[h!]
\includegraphics[clip,width=0.49\textwidth,keepaspectratio]{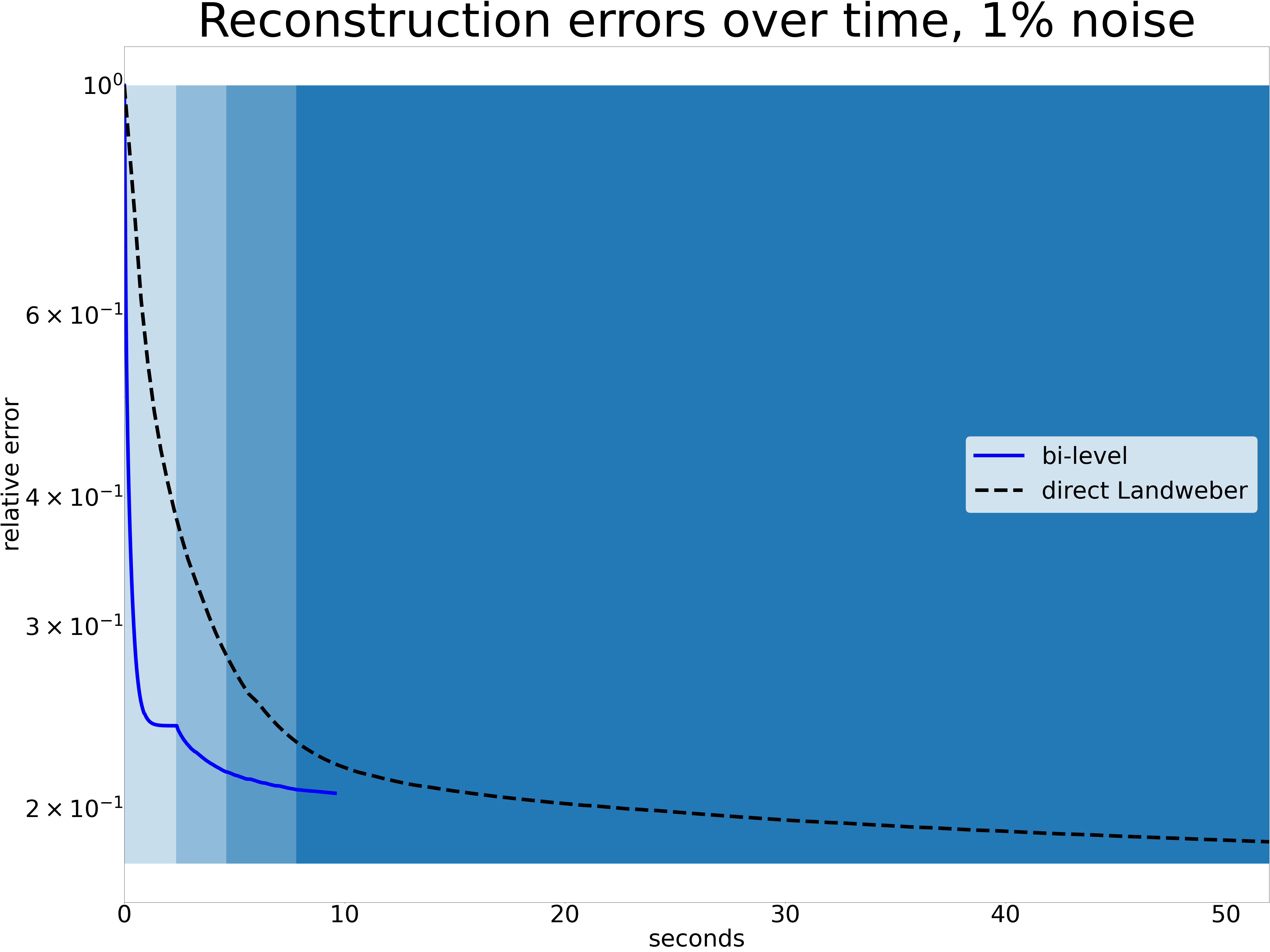}
\includegraphics[clip,width=0.49\textwidth,keepaspectratio]{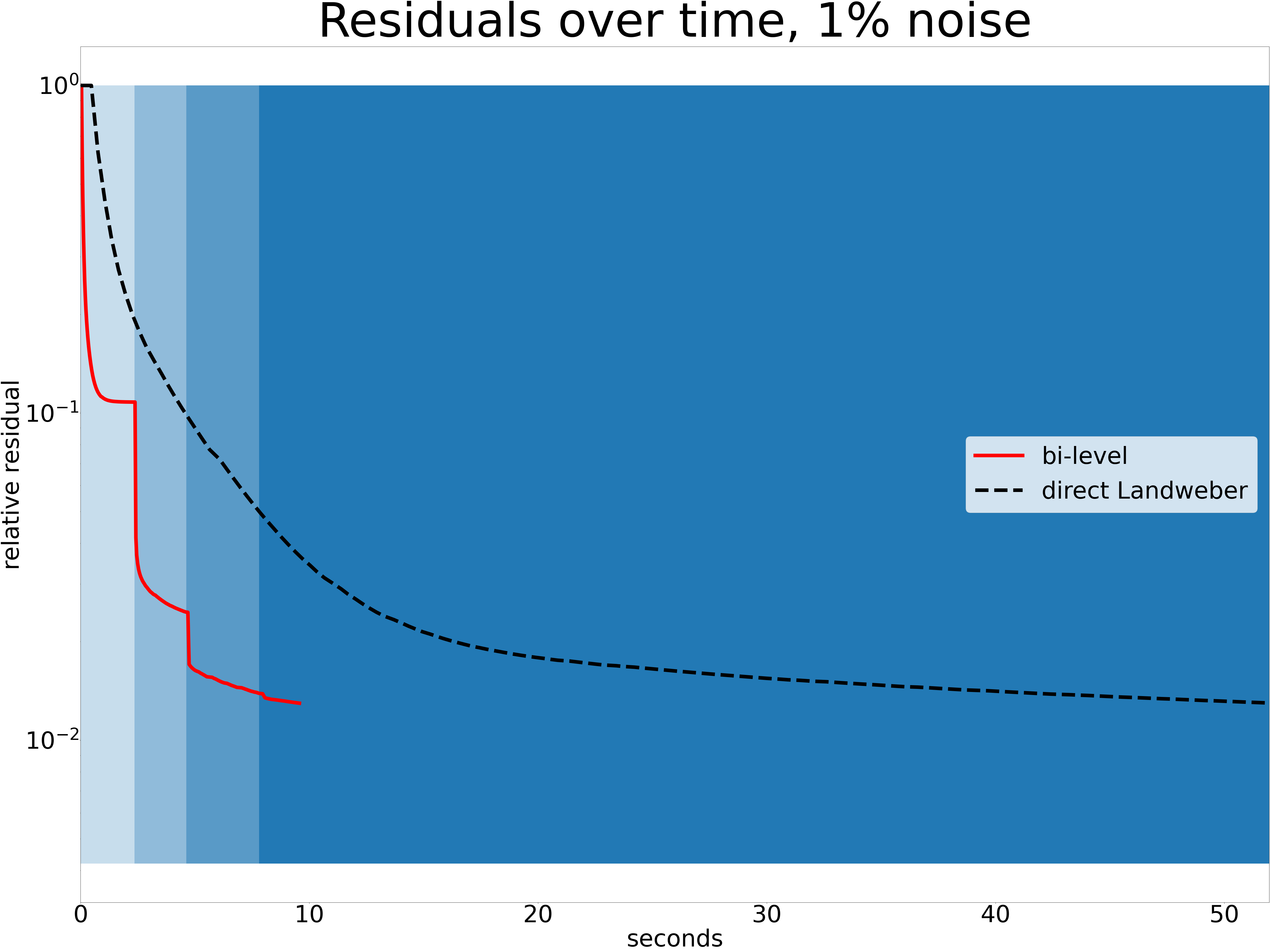} \\[2ex]
\includegraphics[clip,width=0.49\textwidth,keepaspectratio]{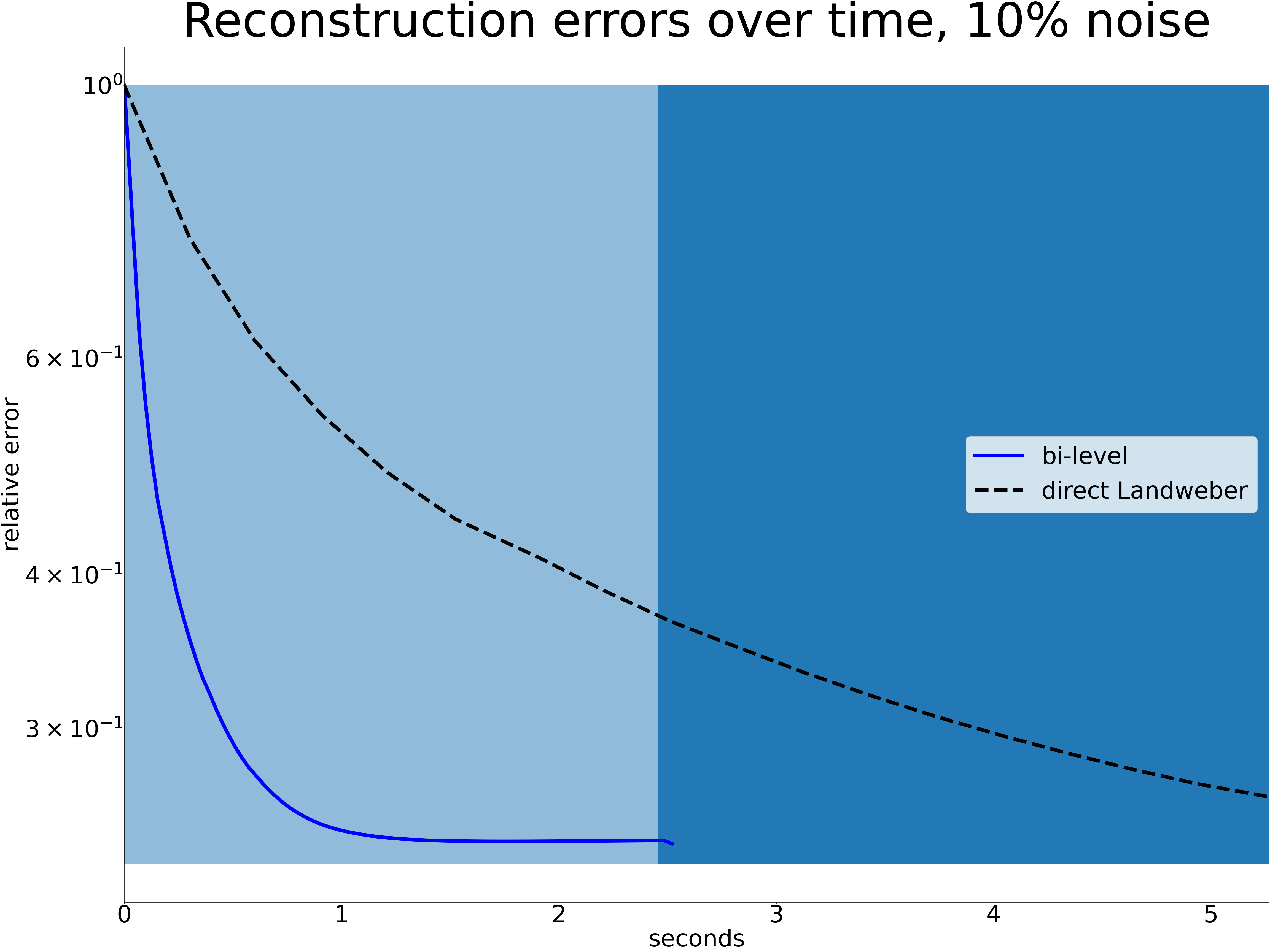}
\includegraphics[clip,width=0.49\textwidth,keepaspectratio]{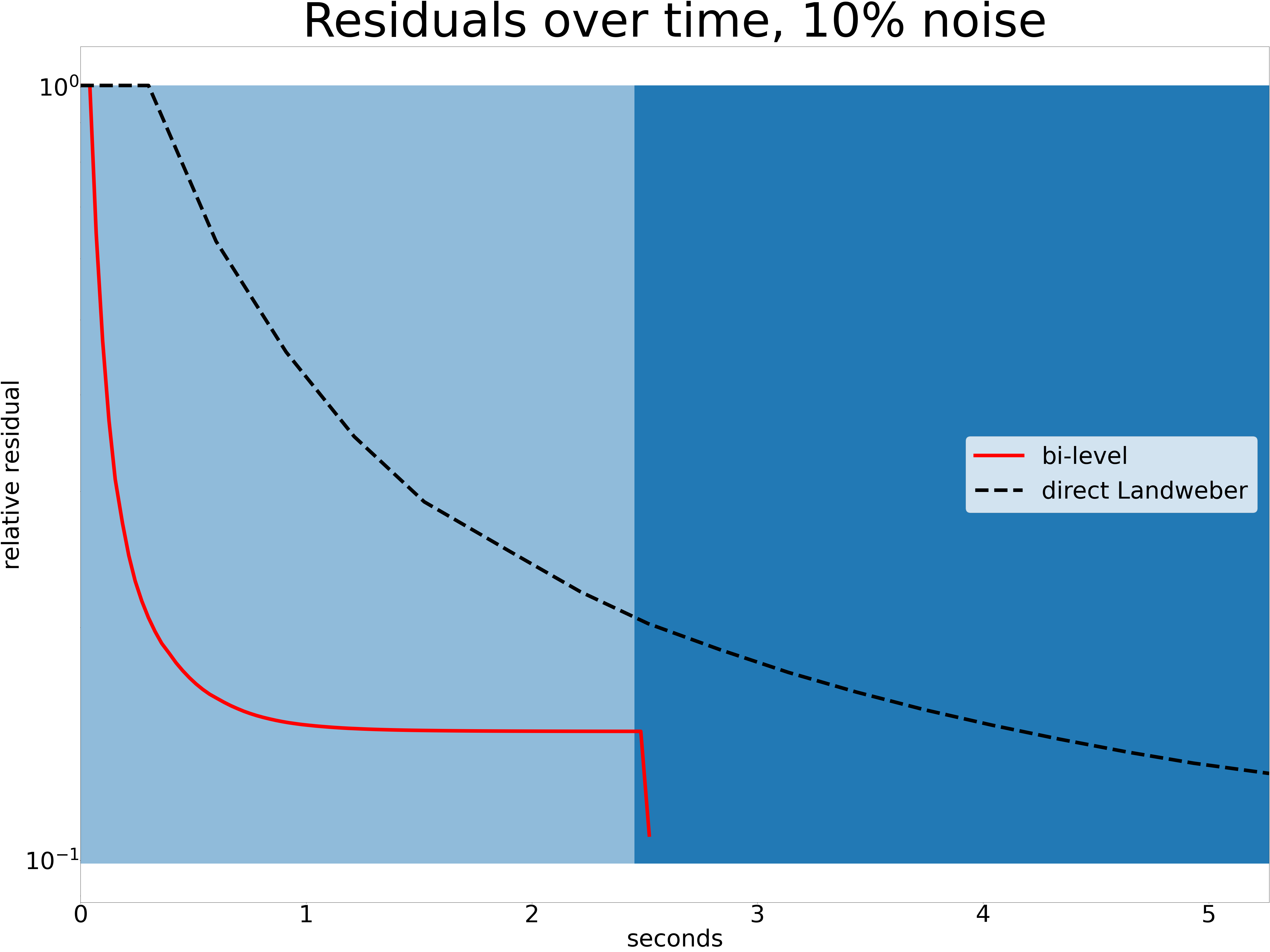}
\caption{Error plots over time. Blue background denotes which mesh refinement the bi-level algorithm is currently on.}
\vspace{-0.15cm}
\label{fig:errs}
\end{figure}

\section{Conclusions}

In this article, we have demonstrated that the bi-level algorithm produces highly efficient numerical results in a manner consistent with that claimed in \cite{nguyen24}.

The recent developments in bi-level algorithms \cite{nguyen24,nguyen-seqbi} opens up exciting usage within the field of optimal experimental design (OED). The article \cite{Aarset} newly proposed a novel method of OED for linear inverse problems. As remarked in \cite{Ucinski}, detailed treatment of the non-linear case would require additional developments within bi-level algorithms, which the authors believe may now be available.

\begin{acknowledgement}
The authors acknowledge support from the DFG through Grant 432680300 - SFB 1456 (C04). The authors moreover thank Thorsten Hohage for his comments on unique identifiability.
\end{acknowledgement}




\end{document}